\documentclass[10pt]{amsart}
\usepackage{amssymb}
\textwidth 15.1cm \textheight 21.08cm \topmargin 0.0cm
\oddsidemargin 0.0cm \evensidemargin 0.0cm
\parskip -0.0cm

\def\th{\hbox{  th} }

\newtheorem{lemme}{Lemma}
\newtheorem{coro}{Corollary}
\newtheorem{theorem}{Theorem}


\begin{document}

\title [Frame type inequalities for Bessel and modified Bessel functions ]{Extension of Frame's type inequalities to Bessel and modified Bessel functions \\}%

\author[ Khaled Mehrez]{KHALED MEHREZ }
\address{Khaled Mehrez. D\'epartement de Math\'ematiques ISSAT Kasserine, Universit\'e de Kairouan, Tunisia.}
 \email{k.mehrez@yahoo.fr}
\begin{abstract}
 Our aim is to extend some trigonometric inequalities to Bessel functions. Moreover, we extend the hyperbolic analogue of these trigonometric inequalities. As an application of these results we present a generalization of Cusa-type inequality to modified Bessel function. Our main motivation to write this paper is a recent publication of Chen and S\'andor, which we
wish to complement.
\end{abstract}
\maketitle
\noindent\textbf{keywords:} The Bessel functions, The modified Bessel functions, Frame's type inequalities.  \\
\\
\noindent\textbf{MSC (2010):} 33C10; 33C15.
\section{\textbf{Introduction}}
 Bessel and modified Bessel functions of first kinds play an important
role in various branches of applied mathematics and
engineering sciences. Their properties have been investigated
by many scientists and there is a very extensive literature
dealing with Bessel functions.  In the last few decades many inequalities and monotonicity properties for the
functions $J_p$ and $I_p$ and their several combinations have been deduced by many authors,
motivated by various problems that arise in wave mechanics, fluid mechanics, electrical
engineering, quantum billiards, biophysics, mathematical physics, finite elasticity, probability and statistics, special relativity, etc. For examples,  some inequalities for the ratio of $\frac{J_p(x)}{J_p(y)}$ and $\frac{I_p(x)}{I_p(y)}$ was used in 1995 by Sitnik \cite{Sitnik}. Recently, Mehrez in \cite{Kh1}, by using the monotonicity of the functions $x\longmapsto\frac{1-\mathcal{J}_{p+1}(x)}{1-\mathcal{J}_{p}(x)}$ and $x\longmapsto\frac{1-\mathcal{I}_{p+1}(x)}{1-\mathcal{I}_{p}(x)}$ we give an extension of Huygens types inequalities for Bessel and modified Bessel functions. Moreover, another inequality which is of interest in this paper is discovered in 1954, by Frame \cite{frame},
\begin{equation}
\frac{(3+(x^2/11))\sinh x}{2+\cosh x+(x^2/11)}<x<\frac{(3+(x^2/10))\sinh x}{2+\cosh x+(x^2/10)}
\end{equation}
which holds for for $0<x<5$.

Recently, Chen and S\'andor \cite{chen} extended and sharped  this inequalities as follows:
\begin{equation}\label{mm}
\frac{(3+ \rho_1 x^2)\sinh x}{2+\cosh x+\rho_1 x^2}<x<\frac{(3+\rho_2 x^2)\sinh x}{2+\cosh x+\rho_2 x^2}
\end{equation} 
where $x>0$ and $\rho_1=0$ and $\rho_2=1/10$ are the beset possible constants. In addition, this authors we present the trigonometric version of this inequalities as follows, for $0<x<\pi/2$
\begin{equation}\label{nn}
\frac{(3- \rho_1 x^2)\sin x}{2+\cos x+\rho_1 x^2}<x<\frac{(3-\rho_2 x^2)\sin x}{2+\cos x-\rho_2 x^2},
\end{equation}
where $\rho_1=1/10$ and $\rho_2=\frac{8\pi-24}{\pi^3-2\pi^2}$ are the beset possible constants.           

  In this paper, our aim is to extend the above inequalities to Bessel and modified Bessel functions of the first kinds. For this, let us consider the function  $\mathcal{J}_{p}:\mathbb{R}\longrightarrow(-\infty, 1],$ defined by
\begin{equation}\label{55555}
\mathcal{J}_{p}(x)=2^p\Gamma(p+1)x^{-p}J_{p}(x)=\sum_{n\geq 0}\frac{\left(\frac{-1}{4}\right)^n}{(p+1)_n n!}x^{2n}, p>-1,
\end{equation}
where $\Gamma$ is the gamma function, $(p+1)_n=\Gamma(p+n+1)/\Gamma(p+1)$ for each $n\geq0$, is the well-known Pochhammer (or Appell) symbol, and $J_p$ defined by
$$J_p(x)=\sum_{n\geq0}\frac{(-1)^n(x/2)^{p+2n}}{n!\Gamma(p+n+1)},$$
stands for the Bessel function of the first kind of order $p$ It is worth mentioning that in
particular the function $J_p$  reduces to some elementary functions, like sine and cosine.
More precisely, in particular we have:
\begin{equation}\label{ee}
\mathcal{J}_{-1/2}(x)=\sqrt{\pi/2}.x^{1/2}J_{-1/2}(x)=\cos x,
\end{equation}
\begin{equation}\label{ee1}
\mathcal{J}_{1/2}(x)=\sqrt{\pi/2}.x^{-1/2}J_{1/2}(x)=\frac{\sin x}{x},
\end{equation}
\begin{equation}\label{ee2}
\mathcal{J}_{3/2}(x)=3\sqrt{\pi/2}.x^{-3/2}J_{3/2}(x)=3\left(\frac{\sin x}{x^3}-\frac{\cos x}{x^2}\right).
\end{equation}

For $p>-1,$ let us consider $\mathcal{I}_{p}:\mathbb{R}\longrightarrow[1,\infty)$   is defined by 
\begin{equation*}
\mathcal{I}_{p}(x)=2^p\Gamma(p+1)x^{-p}I_{p}(x)=\sum_{n\geq 0}\frac{\left(\frac{1}{4}\right)^n}{(p+1)_n n!}x^{2n},
\end{equation*}
where $I_p$ is the modifed Bessel function of the first kind defined by 

$$I_\nu(x)=\sum_{n\geq0}\frac{(x/2)^{\nu+2n}}{n!\Gamma(\nu+n+1)},\; \textrm{for all}\; x\in\mathbb{R}.$$
It is worth mentioning that in particular we have
\begin{equation}\label{mmm1}
\mathcal{I}_{-1/2}(x)=\sqrt{\pi/2}.x^{1/2}I_{-1/2}(x)=\cosh x,
\end{equation}
\begin{equation}\label{mmm2}
\mathcal{I}_{1/2}(x)=\sqrt{\pi/2}.x^{1/2}I_{-1/2}(x)=\frac{\sinh x}{x}.
\end{equation}

 The paper is organized as follows: In Section 2 our aim is to extend the inequality (\ref{nn}) to Bessel functions of the first
kind. This is motivated by the simple fact that the above inequalities can be rewritten in terms of Bessel functions. In addition, we present also the counterpart of these results for modiﬁed Bessel functions of the first kind. In section 3, we would like comment the main results. In particular, we present new Tur\'an type inequality for Bessel function. \\

 In order to study the main results we need the following lemma.

\begin{lemme}\label{l1}\cite{ponn} Let $a_n$ and $b_n\;(n=0,1,2,...)$ be real numbers,  and let the power series $A(x)=\sum_{n=0}^{\infty}a_{n}x^{n}$ and $B(x)=\sum_{n=0}^{\infty}b_{n}x^{n}$ be convergent for $|x|<R.$ If $b_n>0$ for $n=0,1,..,$ and if $\frac{a_n}{b_n}$ is strictly increasing (or decreasing) for $n=0,1,2...,$ then the function $\frac{A(x)}{B(x)}$ is strictly increasing (or decreasing) on $(0,R).$
\end{lemme}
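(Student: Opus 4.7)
The plan is to reduce the monotonicity of $A/B$ to the sign of the Wronskian-like quantity
\[
W(x) := A'(x) B(x) - A(x) B'(x),
\]
and to read off the sign of $W$ directly from its power-series coefficients. Since $b_n > 0$ for every $n$, we have $B(x) > 0$ on $[0,R)$, so $A/B$ is smooth on $(0,R)$ and $(A/B)'(x) = W(x)/B(x)^2$ has the same sign as $W(x)$.

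The first concrete step is the Cauchy-product computation
\[
W(x) = \sum_{k\geq 0} c_k \, x^k, \qquad c_k \;=\; \sum_{n=1}^{k+1} n \bigl( a_n b_{k+1-n} - a_{k+1-n}\, b_n \bigr).
\]
The second step is to symmetrize. Substituting $n \mapsto k+1-n$ in the sum defining $c_k$ and averaging with the original expression (after trivially extending the two summation ranges with zero terms) gives
\[
2 c_k \;=\; \sum_{n=0}^{k+1} (2n-k-1)\bigl( a_n b_{k+1-n} - a_{k+1-n}\, b_n \bigr).
\]

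Both the factor $2n - k - 1$ and the bracket
\[
a_n b_{k+1-n} - a_{k+1-n} b_n \;=\; b_n\, b_{k+1-n}\bigl( a_n/b_n - a_{k+1-n}/b_{k+1-n} \bigr)
\]
are antisymmetric under $n \mapsto k+1-n$, so their product is symmetric. Under the hypothesis that $a_n/b_n$ is strictly increasing, they carry the same sign for every $n \neq (k+1)/2$: positive times positive when $n > (k+1)/2$ and negative times negative when $n < (k+1)/2$. Each summand is therefore non-negative, and at least one (e.g.\ $n=k+1$, $j=0$) is strictly positive, so $c_k > 0$ for every $k \geq 0$. Consequently $W(x) > 0$ on $(0,R)$ and $A/B$ is strictly increasing there. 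The strictly decreasing case follows by applying the result just proved to $-A$.

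The main obstacle I anticipate is purely the bookkeeping in the symmetrization step: one needs the antisymmetries of the two factors to line up so that their product is manifestly of one sign across the entire range of summation and so that the boundary terms at $n=0$ and $n=k+1$ are handled correctly. Once this is arranged, the rest of the argument is formal.
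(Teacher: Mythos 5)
Your proof is correct. Note that the paper itself does not prove this lemma at all---it is quoted with a citation to Ponnusamy and Vuorinen---and your argument (expressing $A'B-AB'$ as a Cauchy product, symmetrizing the coefficient under $n\mapsto k+1-n$, and observing that the factor $2n-k-1$ and the bracket $a_nb_{k+1-n}-a_{k+1-n}b_n$ always share the same sign) is precisely the standard proof found in that reference, going back to Biernacki and Krzy\.z. The only blemishes are cosmetic: the stray ``$j=0$'' should read $k+1-n=0$, and you should state explicitly that the sum may be extended to $n=0$ because that term vanishes before performing the substitution.
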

\section{\textbf{Main results}}

Our first main results is the next Theorem.

\begin{theorem} \label{t1}Let $-1<p\leq -1/2$ and let $j_{p,1}$ be the first positive zero of the Bessel function $J_p.$ Then the following inequalities holds for all $x\in(0,j_{p,1})$
\begin{equation}\label{1}
\frac{\left(\frac{p+2}{p+1}-\alpha x^2\right)\mathcal{J}_{p+1}(x)}{\frac{1}{p+1}+\mathcal{J}_p(x)-\alpha x^2}<1<\frac{\left(\frac{p+2}{p+1}-\beta x^2\right)\mathcal{J}_{p+1}(x)}{\frac{1}{p+1}+\mathcal{J}_p(x)-\beta x^2},
\end{equation}
with best possible constant $\alpha=\frac{1}{8(p+1)(p+3)}$ and $\beta=\frac{\frac{1}{p+1}-\frac{p+2}{p+1}\mathcal{J}_{p+1}(j_{p,1})}{j_{p,1}^2\left(1-\mathcal{J}_{p+1}(j_{p,1})\right)}.$
\end{theorem}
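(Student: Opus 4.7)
The plan is to reduce the two-sided bound (\ref{1}) to the monotonicity of a single auxiliary function, and to identify its boundary values as exactly $\alpha$ and $\beta$. Define
$$\Phi_p(x)\,:=\,\frac{\frac{1}{p+1}+\mathcal{J}_p(x)-\frac{p+2}{p+1}\mathcal{J}_{p+1}(x)}{x^2\bigl(1-\mathcal{J}_{p+1}(x)\bigr)}.$$
On $(0,j_{p,1})$ the denominator is strictly positive: indeed $\mathcal{J}_{p+1}'(x)=-\frac{x}{2(p+2)}\mathcal{J}_{p+2}(x)<0$ on $(0,j_{p+2,1})$, and the zero-interlacing $j_{p,1}<j_{p+1,1}<j_{p+2,1}$ keeps $\mathcal{J}_{p+1}(x)\in(0,1)$ there. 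A short rearrangement then shows that (\ref{1}) is equivalent to $\alpha<\Phi_p(x)<\beta$ on $(0,j_{p,1})$.

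The boundary values follow from a Taylor-expansion calculation. Using the three-term Bessel recurrence in the form $(p+1)\mathcal{J}_p(x)-(p+2)\mathcal{J}_{p+1}(x)=-\mathcal{J}_{p+1}(x)-\frac{x^2}{4(p+2)}\mathcal{J}_{p+2}(x)$, the numerator of $\Phi_p$ rewrites as $\frac{1}{p+1}\bigl[(1-\mathcal{J}_{p+1}(x))-\frac{x^2}{4(p+2)}\mathcal{J}_{p+2}(x)\bigr]$, from which the leading $x^4$ coefficient gives $\lim_{x\to0^+}\Phi_p(x)=\frac{1}{8(p+1)(p+3)}=\alpha$. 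Since $\mathcal{J}_p(j_{p,1})=0$, substitution at the endpoint gives $\Phi_p(j_{p,1})=\beta$.

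The heart of the argument is the monotonicity of $\Phi_p$. Set $y=x^2$ and write the numerator and denominator of $\Phi_p$ as power series in $y$ both starting at order $y^2$: say $N=\sum_{n\ge 2}A_n y^n$ and $D=\sum_{n\ge 2}B_n y^n$. A direct manipulation using $(p+2)_{n-1}/(p+1)_{n+1}=1/[(p+1)(p+n+1)]$ yields
$$\frac{A_n}{B_n}\,=\,\frac{n-1}{4n(p+1)(p+n+1)},$$
a positive sequence which is strictly decreasing in $n$ for all $p<0$ (the monotonicity check reduces to $(n-2)(n+1)>p$). Both $A_n$ and $B_n$ carry the common alternating factor $(-1)^n/4^n$, so after passing to the companion positive-coefficient series $\sum|A_n|y^n$, $\sum|B_n|y^n$ (formally $y\mapsto -y$), Lemma~\ref{l1} yields that their ratio is strictly decreasing on its interval of convergence. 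Reversing the substitution, this translates to $\Phi_p$ being strictly \emph{increasing} on $(0,j_{p,1})$. Combined with the boundary values, this is exactly (\ref{1}), and the sharpness of $\alpha,\beta$ is immediate since they are the attained endpoint limits.

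The main obstacle is that Lemma~\ref{l1} in its stated form requires strictly positive coefficients, whereas the Bessel series coefficients $A_n,B_n$ alternate. The needed ``alternating version''---that an alternating-sign ratio inherits the opposite monotonicity from its positive-coefficient companion on overlapping intervals---is the sign-reversal principle used above, but making it airtight may require a supplementary argument (for instance via the quotient rule applied directly to the alternating series, or via transfer from the hyperbolic counterpart of the theorem where $\mathcal{J}$ is replaced by $\mathcal{I}$ and Lemma~\ref{l1} applies verbatim). The restriction $p\le-1/2$ in the hypothesis in particular forces $p<0$, which is precisely the sign condition that drives the monotonicity of $A_n/B_n$ established above; it is the natural range in which the scheme closes cleanly.
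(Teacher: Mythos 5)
Your setup matches the paper's: the same auxiliary function ($\Phi_p$ is the paper's $F$), the same reduction of (\ref{1}) to $\alpha<\Phi_p(x)<\beta$, and the same endpoint limits, all of which are fine. The gap is the central step, the monotonicity of $\Phi_p$ on $(0,j_{p,1})$. Your ``sign-reversal principle'' is not a valid argument: writing $N(y)=\tilde N(-y)$, $D(y)=\tilde D(-y)$ with $\tilde N,\tilde D$ having positive coefficients, Lemma~\ref{l1} gives monotonicity of $\tilde N(u)/\tilde D(u)$ for $u\in(0,R)$, which says something about $\Phi_p$ only for \emph{negative} $y$ --- vacuous here since $y=x^2>0$. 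There is no general transfer principle asserting that an alternating-coefficient ratio inherits the opposite monotonicity of its positive-coefficient companion on the positive axis, and no transfer from the $\mathcal{I}$-version either; you flag this yourself as needing ``a supplementary argument,'' but that supplementary argument is essentially the entire content of the paper's proof. The paper does not invoke Lemma~\ref{l1} for Theorem~\ref{t1} at all (it reserves it for the modified Bessel case, Theorem~\ref{t2}, where the coefficients really are positive). Instead it computes $H(x)=x^4(1-\mathcal{J}_{p+1}(x))^2F'(x)$ explicitly, splits it as $A+B+C+D$, expands each piece via the Cauchy product formula, uses the bound $j_{p,1}^2<2(p+1)(p+3)$ to show the tails are alternating with decreasing terms so they can be truncated from below, and then verifies positivity of the resulting polynomial combination.

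A second, related inaccuracy: you assert that the hypothesis $p\le-1/2$ ``is precisely the sign condition that drives the monotonicity of $A_n/B_n$.'' That monotonicity only needs $p<0$ (indeed Theorem~\ref{t2} uses it for all $-1<p\le0$). The stronger restriction $p\le-1/2$ enters only in the paper's final positivity estimates for $H(x)$ (e.g.\ the inequality $88p^2+460p+171<0$ at the end of the chain), i.e.\ exactly in the step your proposal leaves open. So the proposal correctly identifies the skeleton of the argument but is missing the proof of its key claim.
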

\begin{proof} Let $-1<p\leq -1/2$, we consider the function $F$ defined by 
$$F(x)=\frac{\frac{1}{p+1}+\mathcal{J}_{p}(x)-\frac{p+2}{p+1}\mathcal{J}_{p+1}(x)}{x^2(1-\mathcal{J}_{p+1}(x))}=\frac{f_1(x)}{f_2(x)},$$
where $f_1(x)=\frac{1}{p+1}+\mathcal{J}_{p}(x)-\frac{p+2}{p+1}\mathcal{J}_{p+1}(x)$ and $f_2(x)=x^2(1-\mathcal{J}_{p+1}(x)).$
By using the differentiation formula [\cite{wa}, p. 18]
$$\mathcal{J}_{p}^{\prime}(x)=-\frac{x}{2(p+1)}\mathcal{J}_{p+1}(x),$$
we get
$$f_1^{\prime}(x)f_2(x)=\frac{x^3}{2(p+1)}\left(\mathcal{J}_{p+2}(x)-\mathcal{J}_{p+1}(x)+\mathcal{J}_{p+1}^2(x)-\mathcal{J}_{p+1}(x)\mathcal{J}_{p+2}(x)\right),$$
and
\begin{equation*}
\begin{split}
f_1(x)f_2^{\prime}(x)&=\frac{x^3}{2(p+1)}\left(\frac{\mathcal{J}_{p+2}(x)}{p+2}+\frac{(p+1)\mathcal{J}_{p}(x)\mathcal{J}_{p+2}(x)}{p+2}-\mathcal{J}_{p+1}(x)\mathcal{J}_{p+2}(x)\right)\\
&+2x\left(\frac{1}{p+1}+\mathcal{J}_{p}(x)-\frac{p+3}{p+1}\mathcal{J}_{p+1}(x)-\mathcal{J}_{p}(x)\mathcal{J}_{p+1}(x)+\frac{p+2}{p+1}\mathcal{J}_{p+2}^2(x)\right).
\end{split}
\end{equation*}
Thus
\begin{equation*}
\begin{split}
H(x)=x^4(1-\mathcal{J}_{p+1}(x))^2 F^{\prime}(x)&=\frac{x^3}{2(p+1)}\left(\frac{p+1}{p+2}\mathcal{J}_{p+2}(x)-\mathcal{J}_{p+1}(x)+\mathcal{J}_{p+1}^2(x)-\frac{p+1}{p+2}\mathcal{J}_{p}(x)\mathcal{J}_{p+2}(x)\right)\\
&+2x\left(\frac{p+3}{p+1}\mathcal{J}_{p+1}(x)+\mathcal{J}_{p}(x)\mathcal{J}_{p+1}(x)-\frac{p+2}{p+1}\mathcal{J}_{p+1}^2(x)-\mathcal{J}_{p}(x)-\frac{1}{p+1}\right)\\
&=A(x)+B(x)+C(x)+D(x),
\end{split}
\end{equation*}
where $A(x)=\frac{x^3}{2(p+1)}\left(\frac{p+1}{p+2}\mathcal{J}_{p+2}(x)-\mathcal{J}_{p+1}(x)\right),\;B(x)=\frac{x^3}{2(p+1)}\left(\mathcal{J}_{p+1}^2(x)-\frac{p+1}{p+2}\mathcal{J}_{p}(x)\mathcal{J}_{p+2}(x)\right),\;C(x)=2x\left(\frac{p+3}{p+1}\mathcal{J}_{p+1}(x)-\mathcal{J}_{p}(x)-\frac{2}{p+1}\right)$  and $D(x)=2x\left(\mathcal{J}_{p}(x)\mathcal{J}_{p+1}(x)-\frac{p+2}{p+1}\mathcal{J}_{p+1}^2(x)+\frac{1}{p+1}\right).$
By using power series expansions (\ref{55555}), we find that
$$A(x)=\sum_{n=1}^{\infty}(-1)^n A_n x^{2n+1},$$
where
$$A_n=\frac{n\Gamma(p+1)}{2^{2n-1}\Gamma(n)\Gamma(n+p+2)},\;n\geq1,$$
and
$$C(x)=\sum_{n=1}^{\infty}(-1)^n C_n x^{2n+1},$$
where
$$C_n=\frac{(2-n)\Gamma(p+1)}{2^{2n-1}\Gamma(n+1)\Gamma(n+p+2)},\;n\geq1.$$
Thus
\begin{equation}\label{111}
\begin{split}
A(x)+C(x)&=-\frac{\Gamma(p+1)x^3}{\Gamma(p+3)}+\frac{\Gamma(p+1)x^5}{2^2\Gamma(p+4)}-\frac{\Gamma(p+1)x^7}{2^2\Gamma(4)\Gamma(p+5)}+\frac{7\Gamma(p+1)x^9}{2^6\Gamma(5)\Gamma(p+6)}\\
&-\frac{11\Gamma(p+1)x^{11}}{2^8\Gamma(6)\Gamma(p+7)}+\sum_{n=6}^{\infty}(-1)^n \alpha_n x^{2n+1},
\end{split}
\end{equation}
with
$$\alpha_n=\frac{(n^2-n+2)\Gamma(p+1)}{2^{2n-1}\Gamma(n+1)\Gamma(n+p+2)},\;n\geq6.$$
Elementary calculations reveal that for $0<x<j_{p,1}$ and $n\geq6,$
\begin{equation}\label{001}
\begin{split}
\frac{\alpha_{n+1}(x)}{\alpha_{n}(x)}&=\frac{(n^2+n+3)x^2}{4(n+1)(n+p+2)(n^2-n+2)}\\
&\leq\frac{x^2}{2(n+1)(n+p+2)}.
\end{split}
\end{equation}
It is also known that for each $p>-1,$  we have the following lower and upper bounds for the square
of the first positive zero [\cite{mourad}, Eq. 6.8]
\begin{equation}\label{00}
4(p+1)\sqrt{p+2}<j_{p,1}^2<2(p+1)(p+3).
\end{equation}
By using the previous inequalities and (\ref{00})we have 
\begin{equation}\label{002}
\begin{split}
\frac{\alpha_{n+1}(x)}{\alpha_{n}(x)}&\leq\frac{x^2}{2(n+1)(n+p+2)}\\
&<\frac{(p+1)(p+3)}{(n+1)(n+p+2)}<1,
\end{split}
\end{equation}
for all $n\geq6$ and $-1<p<0.$ Therefore, for fixed $x\in(0,j_{p,1})$ and $p\in(-1,0)$, the sequence $n\longmapsto\alpha_n(x)$ is strictly decreasing with regard to $n\geq6$. It follows from (\ref{111})
\begin{equation}\label{1002}
A(x)+C(x)>-\frac{\Gamma(p+1)x^3}{\Gamma(p+3)}+\frac{\Gamma(p+1)x^5}{2^2\Gamma(p+4)}-\frac{\Gamma(p+1)x^7}{2^2\Gamma(4)\Gamma(p+5)}+\frac{7\Gamma(p+1)x^9}{2^6\Gamma(5)\Gamma(p+6)}-\frac{11\Gamma(p+1)x^{11}}{2^8\Gamma(6)\Gamma(p+7)}
\end{equation}
On the other hand, by using the Cauchy product formula [\cite{wa}, p. 147],
\begin{equation}\label{4444}
\mathcal{J}_p(x)\mathcal{J}_q(x)=\sum_{n=0}^{\infty}\frac{\Gamma(p+1)\Gamma(q+1)\Gamma(p+q+2n+1)x^{2n}}{2^{2n}\Gamma(n+1)\Gamma(p+q+n+1)\Gamma(p+n+1)\Gamma(q+n+1)}
\end{equation}
we have 
\begin{equation}\label{222}
\begin{split}
B(x)&=\frac{x^3}{2(p+1)}\left(\sum_{n=0}^{\infty}\frac{(-1)^n\Gamma^2(p+2)\Gamma(2p+2n+3)x^{2n}}{2^{2n}\Gamma(n+1)\Gamma(2p+n+3)\Gamma(p+n+2)\Gamma(p+n+3)}\right)\\
&=\frac{\Gamma(p+1)x^3}{2\Gamma(p+3)}-\frac{\Gamma(p+1)x^5}{2^2\Gamma(p+4)}+\frac{(2p+5)\Gamma(p+1)x^7}{2^5(p+2)\Gamma(p+5)}-\frac{(2p+7)\Gamma(p+1)x^9}{2^5\Gamma(4)(p+2)\Gamma(p+6)}\\&+\frac{(2p+7)(2p+9)\Gamma(p+1)x^{11}}{2^7\Gamma(5)(p+2)(p+3)\Gamma(p+7)}-\frac{(2p+9)(2p+11)\Gamma(p+1)x^{13}}{2^8\Gamma(6)(p+2)(p+3)\Gamma(p+8)}
+\frac{x^3}{2(p+1)}\sum_{n\geq6}(-1)^n B_n x^{2n},
\end{split}
\end{equation}
where
$$B_n=\frac{\Gamma^2(p+2)\Gamma(2p+2n+3)}{2^{2n}\Gamma(n+1)\Gamma(2p+n+3)\Gamma(p+n+2)\Gamma(p+n+3)}.$$
By using the inequality (\ref{00}) and elementary calculations reveal that, for $0<x<j_{p,1}$ and $n\geq1,$
\begin{equation}
\begin{split}
\frac{B_{n+1}(x)}{B_n(x)}&=\frac{(2p+2n+3)x^2}{2(n+1)(p+n+3)(2p+n+3)}\\
&<\frac{(2p+2n+3)j_{p,1}^2}{2(n+1)(p+n+3)(2p+n+3)}\\
&<\frac{(2p+2n+3)(p+1)(p+3)}{(n+1)(p+n+3)(2p+n+3)}\\
&<\frac{(2p+2n+3)}{2(2p+n+3)}<1.
\end{split}
\end{equation}
Therefore, for fixed $x\in(0,j_{p,1})$ and $p\in(-1,0)$, the sequence $n\longmapsto B_n(x)$ is strictly decreasing with regard to $n\geq 1$. It follows from (\ref{222}) we obtain 
\begin{equation}\label{1001}
B_n(x)>\frac{\Gamma(p+1)x^3}{2\Gamma(p+3)}-\frac{\Gamma(p+1)x^5}{2^2\Gamma(p+4)}+\frac{(2p+5)\Gamma(p+1)x^7}{2^5(p+2)\Gamma(p+5)}-\frac{(2p+7)\Gamma(p+1)x^9}{2^5\Gamma(4)(p+2)\Gamma(p+6)}.
\end{equation}
On the other hand, By using the Cauchy product formula (\ref{4444})  we get
\begin{equation}\label{333}
\begin{split}
D(x)&=\sum_{n=1}^{\infty}\frac{(-1)^n\Gamma(p+1)\Gamma(p+2)\Gamma(2p+2n+2)(n-2)x^{2n+1}}{2^{2n-1}\Gamma(n+1)\Gamma(2p+n+3)\Gamma(p+n+1)\Gamma(p+n+2)}\\
&=\frac{\Gamma(p+1)x^3}{2\Gamma(p+3)}-\frac{(2p+7)\Gamma(p+1)x^7}{2^4 \Gamma(4)(p+2)\Gamma(p+5)}+\frac{(2p+7)(2p+9)\Gamma(p+1)x^9}{2^5\Gamma(5)(p+3)(p+2)\Gamma(p+6)}\\&-\frac{3(2p+9)(2p+11)\Gamma(p+1)x^{11}}{2^7\Gamma(6)(p+3)(p+2)\Gamma(p+7)}+\sum_{n\geq6}(-1)^n D_n x^{2n+1},
\end{split}
\end{equation}
where
$$D_n =\frac{\Gamma(p+1)\Gamma(p+2)\Gamma(2p+2n+2)(n-2)}{2^{2n-1}\Gamma(n+1)\Gamma(2p+n+3)\Gamma(p+n+1)\Gamma(p+n+2)},\;n\geq6.$$
From (\ref{00}) and elementary calculations reveal that, for $0<x<j_{p,1}$ and $n\geq6,$
\begin{equation}
\begin{split}
\frac{D_{n+1}(x)}{D_n(x)}&=\frac{(n-1)(2p+2n+3)x^2}{2(n-2)(n+1)(p+n+2)(2p+n+3)}\\
&<\frac{(n-1)(2p+2n+3)j_{p,1}^2}{2(n-2)(n+1)(p+n+2)(2p+n+3)}\\
&<\frac{(n-1)(2p+2n+3)(p+1)(p+3)}{(n-2)(n+1)(p+n+2)(2p+n+3)}\\
&<\frac{(2p+2n+3)(p+3)}{(n-2)(p+n+2)(2p+n+3)}\\
&<\frac{(2p+2n+3)}{(n-2)(p+n+2)}<\frac{(2p+2n+3)}{4(p+n+2)}<1.\\
\end{split}
\end{equation}
So, for fixed $x\in(0,j_{p,1})$ and $p\in(-1,0)$, the sequence $n\longmapsto D_n(x)$ is strictly decreasing with regard to $n\geq 6$. It follows from (\ref{333}) we obtain 
\begin{equation}\label{1000}
D(x)>\frac{\Gamma(p+1)x^3}{2\Gamma(p+3)}-\frac{(2p+7)\Gamma(p+1)x^7}{2^4 \Gamma(4)(p+2)\Gamma(p+5)}+\frac{(2p+7)(2p+9)\Gamma(p+1)x^9}{2^5\Gamma(5)(p+3)(p+2)\Gamma(p+6)}-\frac{3(2p+9)(2p+11)\Gamma(p+1)x^{11}}{2^7\Gamma(6)(p+3)(p+2)\Gamma(p+7)}.
\end{equation}
Let $p\in(-1, -\frac{1}{2}]$ and $x\in(0,j_{p,1}).$ From(\ref{1002}), (\ref{1001}) and (\ref{1000}), we easily get
\begin{equation}
\begin{split}
H(x)&>-\frac{p(p+5)\Gamma(p+1)x^9}{2^6\Gamma(5)(p+2)(p+3)\Gamma(p+6)}+\frac{\Gamma(p+1)(p^2+5p-6)x^{11}}{2^8\Gamma(5)(p+2)(p+3)\Gamma(p+7)}-\frac{(2p+9)(2p+11)\Gamma(p+1)x^{13}}{2^8\Gamma(6)(p+2)(p+3)\Gamma(p+8)}\\
&>-\frac{p(p+5)\Gamma(p+1)x^9}{2^6\Gamma(5)(p+2)(p+3)\Gamma(p+6)}+\frac{\Gamma(p+1)x^{11}}{2^8\Gamma(5)(p+2)(p+3)\Gamma(p+7)}\left[p^2+5p-6-\frac{2(2p+9)(2p+11)(p+1)(p+3)}{5(p+7)}\right]\\
&>-\frac{p(p+5)\Gamma(p+1)x^9}{2^6\Gamma(5)(p+2)(p+3)\Gamma(p+6)}+\frac{\Gamma(p+1)x^{11}}{2^8\Gamma(5)(p+2)(p+3)\Gamma(p+7)}\left[p^2+5p-6-\frac{(2p+9)(2p+11)}{12}\right]\\
&>-\frac{p(p+5)\Gamma(p+1)x^9}{2^6\Gamma(5)(p+2)(p+3)\Gamma(p+6)}+\frac{\Gamma(p+1)(8p^2+20p-171)x^{11}}{2^{10}.3.\Gamma(5)(p+2)(p+3)\Gamma(p+7)}\\
&>-\frac{p(p+5)\Gamma(p+1)x^9}{2^6\Gamma(5)(p+2)(p+3)\Gamma(p+6)}+\frac{\Gamma(p+1)(8p^2+20p-171)(p+1)x^{9}}{2^{9}.3.\Gamma(5)(p+2)\Gamma(p+7)}\\
&>\frac{\Gamma(p+1)x^9}{2^6\Gamma(5)(p+2)(p+3)\Gamma(p+6)}\left[-p(p+5)+\frac{(8p^2+20p-171)}{96}\right]\\
&>-\frac{(88p^2+460p+171)\Gamma(p+1)x^9}{2^{11}.3.\Gamma(5)(p+2)(p+3)\Gamma(p+6)}>0,
\end{split}
\end{equation}
for all $p\in(-1,-\frac{1}{2}]$ and $x\in(0,j_{p,1})$. Therefore, the function $F(x)$ is increasing on $(0,j_{p,1}).$ Furthermore, 
$$\lim_{x\longrightarrow 0}F(x)=\frac{1}{8(p+1)(p+3)} \textrm{and} \lim_{x\longrightarrow j_{p,1}}F(x)=\frac{\frac{1}{p+1}-\frac{p+2}{p+1}\mathcal{J}_{p+1}(j_{p,1})}{j_{p,1}^2\left(1-\mathcal{J}_{p+1}(j_{p,1})\right)}.$$
For this the constants  $\frac{1}{8(p+1)(p+3)}$ and $\frac{\frac{1}{p+1}-\frac{p+2}{p+1}\mathcal{J}_{p+1}(j_{p,1})}{j_{p,1}^2\left(1-\mathcal{J}_{p+1}(j_{p,1})\right)}$ are the best possible. So, the proof of Theorem \ref{t1} is complete.

\end{proof}

In Theorem \ref{t2}, we present the version of inequalities (\ref{1}) for modified Bessel function of the first kind.

\begin{theorem}\label{t2} Let $-1<p\leq0.$ Then the following inequalities holds for all $x\in(0,\infty)$
\begin{equation}\label{200}
\frac{\left(\frac{p+2}{p+1}+\alpha x^2\right)\mathcal{I}_{p+1}(x)}{\frac{1}{p+1}+\mathcal{I}_p(x)+\alpha x^2}<1<\frac{\left(\frac{p+2}{p+1}+\beta x^2\right)\mathcal{I}_{p+1}(x)}{\frac{1}{p+1}+\mathcal{I}_p(x)+\beta x^2},
\end{equation}
 with best possible constant $\alpha=0$ and $\beta=\frac{1}{8(p+1)(p+3)}.$
\end{theorem}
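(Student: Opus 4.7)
The plan is to reduce the double inequality (\ref{200}) to a monotonicity statement for a single quotient. Because $\mathcal{I}_{p+1}(x) > 1$ for every $x > 0$, both halves of (\ref{200}) rearrange simultaneously to
$$\alpha \;<\; G(x) \;:=\; \frac{\mathcal{I}_p(x) + \tfrac{1}{p+1} - \tfrac{p+2}{p+1}\mathcal{I}_{p+1}(x)}{x^{2}\bigl(\mathcal{I}_{p+1}(x) - 1\bigr)} \;<\; \beta,$$
so it suffices to prove that $G$ is strictly decreasing on $(0,\infty)$ and then to identify its two endpoint limits; those limits will automatically be the sharp constants.

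The next step mirrors the power-series analysis of $F$ in Theorem~\ref{t1}, but it is markedly cleaner because all coefficients are positive (no alternating-sign cancellations to control). Substituting the defining series for $\mathcal{I}_p$ and $\mathcal{I}_{p+1}$, the constant and $x^{2}$ terms of the numerator cancel, and a short algebraic simplification yields
$$G(x) = \frac{\sum_{n\geq 2}a_n\, x^{2n}}{\sum_{n\geq 2}b_n\, x^{2n}},\qquad a_n = \frac{n-1}{4^{n}\,n!\,(p+1)_{n+1}},\qquad b_n = \frac{1}{4^{n-1}(n-1)!\,(p+2)_{n-1}},$$
with the pleasantly simple ratio $a_n/b_n = (n-1)/[4n(p+1)(p+n+1)]$. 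Comparing consecutive terms reduces $a_{n+1}/b_{n+1} \leq a_n/b_n$ to the elementary inequality $p \leq (n-2)(n+1)$, which holds for every $n \geq 2$ under the hypothesis $-1 < p \leq 0$. Applying Lemma~\ref{l1} in the variable $y = x^{2}$ then gives the desired strict decrease of $G$ on $(0,\infty)$.

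The two sharp constants are read off at the endpoints: the ratio of the lowest-order coefficients gives $\lim_{x\to 0^{+}} G(x) = a_2/b_2 = \tfrac{1}{8(p+1)(p+3)}$, whereas the classical asymptotic $\mathcal{I}_p(x)/\mathcal{I}_{p+1}(x)\sim x/[2(p+1)]$ as $x\to\infty$ makes the numerator of $G$ grow like $\mathcal{I}_p(x)$ and the denominator like $x^{2}\mathcal{I}_{p+1}(x)$, giving $\lim_{x\to\infty}G(x) = 0$. The one subtle point is the borderline case $p = 0$, where the consecutive-ratio test degenerates to equality at $n = 2$; strict monotonicity of $G$ there is recovered by writing $f_1' f_2 - f_1 f_2' = \sum_{m>k\geq 2} 2(m-k)(a_m b_k - a_k b_m)\,x^{2(m+k)-1}$ and noting that these coefficients are all $\leq 0$ and strictly negative for, e.g., $(m,k) = (4,2)$.
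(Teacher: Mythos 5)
Your proof follows essentially the same route as the paper: the same auxiliary quotient $G$, the same power-series coefficients with ratio $a_n/b_n=(n-1)/[4n(p+1)(p+n+1)]$, Lemma~\ref{l1}, and the same endpoint limits. Your observation that at $p=0$ the ratio test gives only $a_3/b_3=a_2/b_2$ (so the paper's claim of \emph{strict} decrease of the coefficient ratio is not quite right there), together with the direct sign analysis of $g_1'g_2-g_1g_2'$ to recover strictness, is a small but genuine improvement on the published argument.
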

\begin{proof} Let $-1<p\leq0.$ We define the function $G$ on $(0,\infty)$ by
$$G(x)=\frac{\frac{1}{p+1}+\mathcal{I}_{p}(x)-\frac{p+2}{p+1}\mathcal{I}_{p+1}(x)}{x^2(\mathcal{I}_{p+1}(x)-1)}=\frac{g_1(x)}{g_2(x)},$$
where $g_1(x)=\frac{1}{p+1}+\mathcal{I}_{p}(x)-\frac{p+2}{p+1}\mathcal{I}_{p+1}(x)$ and $g_2(x)=x^2(\mathcal{I}_{p+1}(x)-1)$. Thus,
$$g_1(x)=\sum_{n=2}^{\infty}\frac{\Gamma(p+1)(n-1)}{2^{2n}\Gamma(n+1)\Gamma(p+n+2)}x^{2n}=\sum_{n=2}^{\infty}a_n x^{2n},$$
and
$$g_2(x)=\sum_{n=2}^{\infty}\frac{\Gamma(p+2)}{2^{2(n-1)}\Gamma(n)\Gamma(p+n+1)}x^{2n}=\sum_{n=2}^{\infty}b_n x^{2n}.$$
For $-1<p\leq0$, the sequence
$$\frac{a_n}{b_n}=\frac{n-1}{4n(p+1)(p+n+1)},\;n\geq2$$
is strictly decreasing. From Lemma \ref{l1}, the function $G(x)$ is strictly decreasing on $(0,\infty).$ Furthermore, 
$$\lim_{x\longrightarrow 0} G(x)=\frac{a_2}{b_2}=\frac{1}{8(p+1)(p+3)}$$
and using the asymptotic formula [\cite{1}, p. 377]
\[
I_{\nu}(x)=\frac{e^{x}}{\sqrt{2\pi x}}\left[1-\frac{4\nu^{2}-1}{1!(8x)}+\frac{(4\nu^{2}-1)(4\nu^{2}-9)}{2!(8x)^{2}}-...\right]\]
which holds for large values of $x$ and for fixed $\nu > - 1$, we obtain
$$\lim_{x\longrightarrow\infty}G(x)=0.$$
So, the proof of Theorem \ref{t2} is complete.
\end{proof}
\begin{coro} Let $-1<p\leq0.$ Then the following inequalities
\begin{equation}\label{kh7}
\mathcal{I}_{p+1}(x)\leq\frac{1+(p+1)\mathcal{I}_p(x)}{p+2}
\end{equation}
\begin{equation}\label{kh8}
\frac{\mathcal{I}_{p+1}(x)}{\mathcal{I}_p (x)}+\frac{(p+2)\mathcal{I}_p (x)}{1+(p+1)\mathcal{I}_p (x)}\geq2,
\end{equation}
\begin{equation}\label{kh9}
\frac{\th x}{x}+\frac{3\cosh x}{2+\cosh x}\geq2,
\end{equation}
holds for all $x\in\mathbb{R}.$
\end{coro}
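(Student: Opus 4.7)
The plan is to derive all three inequalities directly from Theorem~\ref{t2} without further analysis. For~\eqref{kh7}, I would invoke the left-hand inequality of~\eqref{200} at the best constant $\alpha=0$, obtaining $\tfrac{(p+2)\mathcal{I}_{p+1}(x)/(p+1)}{\mathcal{I}_p(x)+1/(p+1)}<1$. Clearing the positive denominator and multiplying through by $p+1>0$ gives $(p+2)\mathcal{I}_{p+1}(x)\leq 1+(p+1)\mathcal{I}_p(x)$, which is exactly~\eqref{kh7} (equality occurring at $x=0$, where $\mathcal{I}_p(0)=\mathcal{I}_{p+1}(0)=1$).

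For~\eqref{kh8}, the key idea is to use~\eqref{kh7} to bound the second summand from below by the reciprocal of the first. Since $(p+2)\mathcal{I}_{p+1}(x)\leq 1+(p+1)\mathcal{I}_p(x)$ and $\mathcal{I}_{p+1}(x)>0$, I have
\[
\frac{(p+2)\mathcal{I}_p(x)}{1+(p+1)\mathcal{I}_p(x)}\;\geq\;\frac{(p+2)\mathcal{I}_p(x)}{(p+2)\mathcal{I}_{p+1}(x)}\;=\;\frac{\mathcal{I}_p(x)}{\mathcal{I}_{p+1}(x)}.
\]
Adding $\mathcal{I}_{p+1}(x)/\mathcal{I}_p(x)$ to both sides and applying the elementary AM--GM bound $u+1/u\geq 2$ with $u=\mathcal{I}_{p+1}(x)/\mathcal{I}_p(x)>0$ then yields~\eqref{kh8}.

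For~\eqref{kh9}, I would specialize~\eqref{kh8} to $p=-1/2$ and use the closed forms $\mathcal{I}_{-1/2}(x)=\cosh x$ and $\mathcal{I}_{1/2}(x)=\sinh x/x$ from~\eqref{mmm1}--\eqref{mmm2}; the fractions $(\sinh x/x)/\cosh x$ and $(3/2)\cosh x/(1+(1/2)\cosh x)$ simplify immediately to $(\tanh x)/x$ and $3\cosh x/(2+\cosh x)$ respectively.

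I anticipate no genuine obstacle; the only minor subtlety is that Theorem~\ref{t2} is stated on $(0,\infty)$ while the corollary asserts the inequalities on all of $\mathbb{R}$. This extension is immediate from the evenness of $\mathcal{I}_p$, $\mathcal{I}_{p+1}$, $\cosh x$, and $(\tanh x)/x$, with equality recovered at $x=0$.
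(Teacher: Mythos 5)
Your proposal is correct and follows essentially the same route as the paper: obtain \eqref{kh7} from the left-hand inequality of \eqref{200} with $\alpha=0$, deduce \eqref{kh8} via the AM--GM bound $u+1/u\geq 2$, and specialize to $p=-1/2$ for \eqref{kh9}. You merely spell out the intermediate bounding step and the evenness argument that the paper leaves implicit.
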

\begin{proof} Choosing in (\ref{200}), the values $\alpha=0$  we conclude that (\ref{kh7}) holds. Inequality (\ref{kh8}) follows from the the arithmetic-geometric mean inequality. By using (\ref{mmm1}) and (\ref{mmm2}) in particular for $p=-1/2$ the inequality (\ref{kh8}) becomes (\ref{kh9}).
\end{proof}
\section{\textbf{Concluding remarks}}
In this section we would like to comment the main results of this paper.\\

\noindent \textbf{1.} First note that choosing $p=-1/2$ in (\ref{1}) and (\ref{200}), respectively, then we reobtain the inequalities (\ref{nn}) and (\ref{mm}), respectively. \\
\noindent \textbf{2.} Taking $\alpha=\frac{1}{8(p+1)(p+3)}$ and $\beta=\frac{\frac{1}{p+1}-\frac{p+2}{p+1}\mathcal{J}_{p+1}(j_{p,1})}{j_{p,1}^2\left(1-\mathcal{J}_{p+1}(j_{p,1})\right)}$ in (\ref{1}) we get the inequalities
\begin{equation}
\frac{\frac{1}{p+1}+\mathcal{J}_p(x)-\frac{\left(\frac{1}{p+1}-\frac{p+2}{p+1}\mathcal{J}_{p+1}(j_{p,1}\right)x^2}{j_{p,1}^2(1-\mathcal{J}_{p+1}(j_{p,1}))}}{\frac{p+2}{p+1}-\frac{\frac{1}{p+1}-\frac{p+2}{p+1}\mathcal{J}_{p+1}(j_{p,1})}{j_{p,1}^2\left(1-\mathcal{J}_{p+1}(j_{p,1})\right)}x^2}<\mathcal{J}_{p+1}(x)<\frac{\frac{1}{p+1}+\mathcal{J}_p(x)-\frac{x^2}{8(p+1)(p+3)}}{\frac{p+2}{p+1}-\frac{x^2}{8(p+1)(p+3)}},
\end{equation}
where $x\in(-j_{p,1},j_{p,1})$.\\
\noindent \textbf{3.} The  inequality  (\ref{kh7}) is a natural extension of the Cusa type inequality \cite{Sandor}.
\begin{equation}      
\frac{\sinh x}{x}<\frac{2+\cosh x}{3},\; x>0.
\end{equation}
\noindent \textbf{4.} In proof of Theorem \ref{t1}, we can see that the following Tur\'an type inequality 
\begin{equation}\label{rrr}
\mathcal{J}_{p+1}^2(x)-\frac{p+1}{p+2}\mathcal{J}_{p}(x)\mathcal{J}_{p+2}(x)>0
\end{equation}
holds for all $p\in(-1,0)$ and $x\in(-j_{p,1},j_{p,1})$. Indeed, by using the inequality (\ref{222}), we have 
\begin{equation}
\mathcal{J}_{p+1}^2(x)-\frac{p+1}{p+2}\mathcal{J}_{p}(x)\mathcal{J}_{p+2}(x)=\sum_{n=0}^{\infty}(-1)^n B_n(x)
\end{equation}
where the sequence $n\longmapsto B_n(x)$ is strictly decreasing with regard to $n\geq 1$. In addition, we have 
\begin{equation}
\frac{B_1(x)}{B_0(x)}=\frac{x^2}{2(p+3)}<\frac{j_{p,1}^2}{2(p+3)}<p+1<1,
\end{equation}
for all $p\in(-1,0).$ Thus  the sequence $n\longmapsto B_n(x)$ is strictly decreasing with regard to $n\geq 0,$ that is the Tur\'an-type inequality (\ref{rrr}) hold.\\
\noindent \textbf{5.} On the other hand, observe that using (\ref{ee}), (\ref{ee1}) and (\ref{ee2})  in particular for $p=-1/2$, the Tur\'an type inequality (\ref{rrr}) becomes
$$\cos x\left(\frac{\sin x}{x}-\cos x\right)\leq \sin^2 x,$$
which holds for all $x\in(-\pi/2, \pi/2).$\\
\noindent \textbf{6.} The hyperbolic counterpart of (\ref{rrr}) was established in \cite{Khaled} as follows:
$$\mathcal{I}_{p+1}^2(x)-\frac{p+1}{p+2}\mathcal{I}_{p}(x)\mathcal{I}_{p+2}(x)>0,$$
where $x\in\mathbb{R}.$

\end{document}